\theoremstyle{plain}
\newtheorem{thm}{Theorem}
\newtheorem{cor}[thm]{Corollary}
\theoremstyle{definition}
\newtheorem{rem}[thm]{Remark}
\title{A note on the Lagrangian cobordism group of Weinstein sectors}
\author{Valentin Bosshard}
\begin{document}

\begin{abstract}
The aim of this note is to show that the Lagrangian cobordism group of a Weinstein sector is isomorphic to its middle-dimensional singular cohomology. As an application, a geometric description of Viterbo restriction for cobordism groups is obtained.
\end{abstract}

\maketitle

Let $(X,\mathfrak f)$ be a stopped Liouville manifold and denote $\Omega(X,\mathfrak f)$ its Lagrangian cobordism group as defined in \cite{BOS}. Recall that the generators of $\Omega(X,\mathfrak f)$ are exact Lagrangian submanifolds in $X$ that are conical and disjoint from the stop $\mathfrak f$ at infinity. We prove the following theorem:
\begin{thm}\label{thm1}
    There is a map
    \begin{equation*}
        i\colon\Omega(X,\mathfrak f) \to H_n(X,\partial_\infty X\setminus f)
    \end{equation*}
    taking a Lagrangian submanifold to its relative homology class.\footnote{We adopt the standard abuse of notation where $(X,\partial_\infty X\setminus f)$ denotes $(X_0,\partial X_0\setminus f)$ for a Liouville domain $X_0\subset X$ which completes to $X$. As $X_0$ is a deformation retract of $X$ the choice of $X_0$ is irrelevant.}
    Moreover, if $X$ is Weinstein and $\mathfrak f\subset \partial_\infty X$ a Weinstein hypersurface then the map $i$ is an isomorphism.
\end{thm}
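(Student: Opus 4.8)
The plan is first to check that $i$ is a well-defined homomorphism. A generator $L$ is exact, conical and disjoint from $\mathfrak f$ at infinity, so its Legendrian boundary lies in $\partial_\infty X\setminus f$ and its fundamental class lives in $H_n(X_0,\partial X_0\setminus f)$ for a completing Liouville domain $X_0$; this class is $i(L)$. To see that $i$ descends to $\Omega(X,\mathfrak f)$ I would take a Lagrangian cobordism $V\subset\IR^2\times X$ with ends $L;L_1,\dots,L_k$ (disjoint from $\IR^2\times\mathfrak f$, as in \cite{BOS}) and restrict it over the vertical lines $\{x=\pm R\}\subset\IR^2$ for $R$ large. The slices over $x=-R$ and $x=+R$ recover $L$ and $L_1\sqcup\cdots\sqcup L_k$, while the part of $V$ between them is a compact cobordism projecting to a relative cobordism in $X$ rel $\partial_\infty X\setminus f$; its fundamental chain gives the identity $i(L)=\sum_j i(L_j)$ in $H_n(X,\partial_\infty X\setminus f)$. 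Hence $i$ respects the defining relations.

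\textbf{Surjectivity in the Weinstein case.} Next I would use the Weinstein handle decomposition of $(X,\mathfrak f)$. Its index-$n$ handles carry Lagrangian cocore disks $C_1,\dots,C_m$, which are exact, conical and can be arranged to be disjoint from the stop, hence legitimate generators with $i(C_j)=[C_j]$. By Lefschetz duality and the handle description of $H_*(X,\partial_\infty X\setminus f)$---the cocores are the relative $n$-cycles dual to the belt spheres, and there are no handles of index exceeding $n$---the classes $[C_j]$ generate $H_n(X,\partial_\infty X\setminus f)$. Since $i$ is a homomorphism whose image contains a generating set, $i$ is surjective.

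\textbf{Injectivity, the main obstacle.} The crux is to show that a null-homologous generator is null-cobordant, and I would attack this in two stages. First, generation: the cocores generate the wrapped Fukaya category $\mathcal{W}(X,\mathfrak f)$ (Ganatra--Pardon--Shende), and---via the Biran--Cornea principle that a Lagrangian cobordism yields an iterated cone decomposition of its ends, upgraded to the sectorial setting in \cite{BOS}---this should let me write any generator $L$, up to cobordism, as an integral combination $\sum_j a_j C_j$ of cocores, so that $i(L)=\sum_j a_j[C_j]$. Second, detection: to conclude that the $a_j$ are determined by $i(L)$ I would pair the cocores against the linking disks $D_1,\dots,D_m$ of the index-$n$ critical points using wrapped Floer homology. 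The Floer pairing $HW^*(C_i,D_j)$ computes the topological intersection numbers, which by Lefschetz duality form a nondegenerate pairing; this forces $i(L)=0\Rightarrow a_j=0$ for all $j$, whence $L\sim 0$ in $\Omega(X,\mathfrak f)$. The genuinely hard point---and the step I expect to be delicate---is the first stage: realizing the categorical cone decomposition by an actual Lagrangian cobordism disjoint from the stop, i.e. showing that $\Omega(X,\mathfrak f)$ carries no cobordism relations beyond the homological ones. Equivalently, one must verify that the comparison map $\Omega(X,\mathfrak f)\to K_0(\mathcal{W}(X,\mathfrak f))$ is injective and that $K_0(\mathcal{W}(X,\mathfrak f))\to H_n(X,\partial_\infty X\setminus f)$ is an isomorphism, the latter again reducing to nondegeneracy of the wrapped Floer pairing between cocores and linking disks.
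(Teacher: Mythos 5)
Your well-definedness and surjectivity arguments match the paper's: the cobordism, cut off at a compact set and pushed forward to $X$, is a relative $(n+1)$-chain bounding the sum of the ends modulo $\partial X_0\setminus\mathfrak f$, and the cocores of the index-$n$ handles generate $H_n(X,\partial_\infty X\setminus\mathfrak f)$ by relative Morse theory. The injectivity argument, however, has two genuine gaps, and the second is not merely delicate but structurally wrong. First, the step you flag as hard---upgrading a cone decomposition in $\mathcal W(X,\mathfrak f)$ to an actual cobordism---cannot work as a general principle: the Biran--Cornea machinery only goes from cobordisms to cones, and in fact $\Theta\colon\Omega(X)\to K_0(\mathcal W(X))$ fails to be injective for flexible Weinstein manifolds, where $K_0(\mathcal W(X))=0$ while $H_n(X,\partial_\infty X)$ (and hence, by the very theorem being proved, $\Omega(X)$) need not vanish. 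So your proposed reformulation ``verify that $\Omega\to K_0$ is injective'' asks for something false. The paper avoids the Fukaya category entirely here: by \cite{HH,HHL}, one perturbs $L$ to meet the smooth part of the relative core transversely and applies the Liouville flow, producing a Lagrangian suspension cobordism from $L$ to generalized cocores of the intersection points. This is how $\Omega(X,\mathfrak f)$ is seen to be generated by cocores.

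Second, your ``detection'' step assumes the cocore classes $[C_j]$ are linearly independent in $H_n(X,\partial_\infty X\setminus\mathfrak f)$, which is false whenever index-$(n-1)$ critical points impose relations: for $T^*\mathbb{RP}^2$ the group is $\mathbb Z_2$ and twice the cotangent fiber is null-homologous, so $i(L)=0$ does not force the coefficients to vanish, and no nondegenerate Floer pairing can rescue this. What injectivity actually requires is the converse realization: every Morse-theoretic relation among cocores (one for each index-$(n-1)$ critical point $x$) must be exhibited by a Lagrangian cobordism. This is the content of Lazarev's argument \cite{LAZ} that the paper invokes: in the handle $H_x=T^*B^{n-1}\times B^2$ one chooses coordinates in which the outgoing flow lines to the index-$n$ points are radial in $B^2$, identifies the corresponding cocores with linking disks $D_0,\dots,D_m$ of points of $\partial B^2$, and realizes their boundary connected sum---which is null-cobordant via iterated surgery cobordisms as in \cite{BOS}---thereby imposing the relation $\sum_j C_{y_j}=0$ in $\Omega(X,\mathfrak f)$. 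Once $\Omega(X,\mathfrak f)$ is generated by cocores and all homological relations among them hold in $\Omega(X,\mathfrak f)$, the inverse to $i$ is defined on generators and injectivity follows; your proposal is missing exactly this realization of relations.
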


In the presence of an explicit Weinstein presentation of $(X, \mathfrak f)$ the map $i$ can be described in terms of its core and cocores: If a Lagrangian submanifold intersects the relative core of $(X,\mathfrak f)$ transversely then it gets mapped to the sum of the cocores of this intersection. 

This point of view leads to the following description of Viterbo restriction for cobordism groups:
\begin{cor}\label{cor1}
    Let $Y_0\subset X$ be a Weinstein subdomain of a Liouville manifold $X$ and denote by $Y$ the Liouville completion of $Y_0$ in $X$. 
    Then there is a map
    $$\Omega(X)\to \Omega(Y)$$
    which sends a Lagrangian submanifold of $X$ to the sum of the cocores of the intersection of $L$ and the core of $Y$ whenever the intersection is transverse.
\end{cor}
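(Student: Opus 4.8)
The plan is to \emph{define} the map by transporting a homological restriction through the isomorphisms of \Cref{thm1}, and then to read off the stated geometric recipe. Applying \Cref{thm1} with empty stop gives the tautological class map $i_X\colon\Omega(X)\to H_n(X,\partial_\infty X)$ (available for any Liouville $X$) and, because $Y$ is Weinstein, an isomorphism $i_Y\colon\Omega(Y)\xrightarrow{\sim}H_n(Y,\partial_\infty Y)$. I would define the desired map as the composite
\begin{equation*}
    \Omega(X)\xrightarrow{\,i_X\,}H_n(X,\partial_\infty X)\xrightarrow{\,r\,}H_n(Y,\partial_\infty Y)\xrightarrow{\,i_Y^{-1}\,}\Omega(Y),
\end{equation*}
so that the entire content reduces to constructing the homological map $r$ and identifying the composite with the geometric description in the statement.

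First I would build $r$ from Poincar\'e--Lefschetz duality and cohomological restriction along the inclusion $\iota\colon Y_0\hookrightarrow X_0$. Writing $PD_X\colon H_n(X_0,\partial X_0)\xrightarrow{\sim}H^n(X_0)$ and similarly for $Y_0$, set $r=PD_Y^{-1}\circ\iota^{*}\circ PD_X$. Naturality of the Kronecker pairing immediately gives the characterizing identity
\begin{equation*}
    r(\alpha)\cdot_Y\gamma=\alpha\cdot_X\iota_{*}\gamma\qquad\text{for all }\gamma\in H_n(Y_0),
\end{equation*}
so $r$ is the unique map preserving intersection numbers against cycles coming from $Y_0$.

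Next I would extract the geometric description. Since $Y$ is Weinstein, the Lagrangian cocores $C_1,\dots,C_k$ form a basis of $H_n(Y_0,\partial Y_0)$ Poincar\'e--Lefschetz dual to the core components $c_1,\dots,c_k\in H_n(Y_0)$, so that $C_i\cdot_Y c_j=\delta_{ij}$, and $i_Y^{-1}$ carries a class to the matching formal sum of cocores in $\Omega(Y)$, exactly as in the cocore description accompanying \Cref{thm1}. For a Lagrangian $L\subset X$ transverse to the core of $Y$, expanding $r(i_X(L))$ in this dual basis and using the identity above yields
\begin{equation*}
    r\bigl(i_X(L)\bigr)=\sum_{j}\bigl(i_X(L)\cdot_X c_j\bigr)\,C_j,
\end{equation*}
where $i_X(L)\cdot_X c_j$ is the signed count of points of $L\cap c_j$. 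Applying $i_Y^{-1}$ turns this into the signed sum of the cocores indexed by the points of $L\cap\operatorname{core}(Y)$, which is the asserted recipe; note that $\operatorname{core}(Y)=\operatorname{core}(Y_0)\subset Y_0\subset X$, so the intersection is taken literally inside $X$.

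The main obstacle I anticipate is the sign-sensitive identification in the last step: one must verify that $PD_X(i_X(L))$ is represented by the intersection-counting cocycle of $L$, that its restriction pairs with each core component $c_j$ as the signed count $\#(L\cap c_j)$, and that reading the result in the cocore basis reproduces the signed sum of cocores at the points of $L\cap\operatorname{core}(Y)$ with the orientation conventions of \Cref{thm1}. By contrast, well-definedness of the map itself requires no separate argument: every arrow in the composite is already defined on cobordism or homology classes, so the geometric recipe is only a computation of an a~priori well-defined homomorphism, and its independence of the transverse perturbation is automatic.
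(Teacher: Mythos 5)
Your construction is exactly the paper's: the map is defined as the composite $i_Y^{-1}\circ\bigl(PD_Y^{-1}\circ\iota^*\circ PD_X\bigr)\circ i_X$, using that $i_X$ exists for any Liouville manifold while $i_Y$ is an isomorphism because $Y$ is Weinstein. The paper's proof is just these three sentences; your additional verification of the cocore/intersection recipe is a reasonable elaboration of the geometric description of $i$ that the paper states without proof after Theorem~\ref{thm1}.
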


The map $i$ can be naturally placed within a commutative diagram, alongside other invariants arising from Floer theory:
\begin{cor}\label{cor2}
    Let $X$ be a Liouville manifold and $\mathfrak f\subset \partial_\infty X$ be a Liouville hypersurface. Then the following diagram commutes
    \begin{center}
    \begin{tikzcd}
        \Omega(X,\mathfrak f) \arrow[r, "i"]\arrow[d,  "\Theta"'] &H_n(X,\partial_\infty X\setminus \mathfrak f) \arrow[r, "\mathcal A"] &SH^n(X, \mathfrak f)\\
        K_0(\mathcal W(X, \mathfrak f))\arrow[r, "\mathcal T"] & HH_0(\mathcal W(X, \mathfrak f))\arrow[ur, "\mathcal{OC}"']
    \end{tikzcd} 
    \end{center}
\end{cor}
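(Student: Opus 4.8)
The plan is to verify the commutativity on generators and then to reduce the whole statement to a single compatibility between the open-closed map and classical Poincaré–Lefschetz duality. Since $\Omega(X,\mathfrak f)$ is generated by the classes $[L]$ of exact conical Lagrangians $L$ disjoint from $\mathfrak f$ at infinity, and since $i$, $\mathcal A$, $\Theta$, $\mathcal T$ and $\mathcal{OC}$ are all homomorphisms of abelian groups, it suffices to prove
\begin{equation*}
\mathcal A\bigl(i([L])\bigr)=\mathcal{OC}\bigl(\mathcal T(\Theta([L]))\bigr)\in SH^n(X,\mathfrak f)
\end{equation*}
for every such generator $L$.

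First I would unwind the bottom path. By definition $\Theta([L])$ is the class of $L$ regarded as an object of $\mathcal W(X,\mathfrak f)$, and the trace map $\mathcal T\colon K_0(\mathcal W)\to HH_0(\mathcal W)$ sends this class to the Hochschild $0$-cycle represented by the unit $e_L\in HF^0(L,L)$. Restricting the open-closed map to the summand generated by the single object $L$ yields a ``linear'' open-closed map $\mathcal{OC}_L\colon HF^*(L,L)\to SH^{*+n}(X,\mathfrak f)$, and by construction $\mathcal{OC}(\mathcal T(\Theta([L])))=\mathcal{OC}_L(e_L)$. Thus the right-hand side of the displayed identity is the image of the unit of $L$ under $\mathcal{OC}_L$.

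Next I would identify the top path in classical terms. For an exact Lagrangian there is a canonical isomorphism $HF^*(L,L)\cong H^*(L)$ carrying $e_L$ to $1\in H^0(L)$, and Poincaré duality on the closed manifold $L$ identifies $1$ with the fundamental class $[L]\in H_n(L)$. By \Cref{thm1} the map $i$ sends $[L]$ to the image of $[L]\in H_n(L)$ under the pushforward $H_n(L)\to H_n(X,\partial_\infty X\setminus\mathfrak f)$, and $\mathcal A$ is the composite of Poincaré–Lefschetz duality with the canonical map $H^*(X,\mathfrak f)\to SH^*(X,\mathfrak f)$. Hence $\mathcal A(i([L]))$ is exactly the image of the Poincaré dual of $[L]$ under this canonical map into $SH^n(X,\mathfrak f)$.

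The heart of the argument, and the step I expect to be the main obstacle, is the geometric lemma that these two images coincide, i.e.\ that $\mathcal{OC}_L(e_L)=\mathcal A(i([L]))$. This is a compatibility of the open-closed map with Poincaré–Lefschetz duality and with the PSS-type comparison $H^*(X,\mathfrak f)\to SH^*(X,\mathfrak f)$. I would prove it by analysing the moduli spaces defining $\mathcal{OC}_L(e_L)$: since the input is the unit it imposes no incidence constraint along the boundary on $L$, and an energy/action filtration argument shows that in the relevant degree only the low-energy, essentially constant configurations contribute, so the count collapses to the topological pushforward of the fundamental class of $L$ followed by the natural map into $SH^*$. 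Making this precise requires the standard structural properties of $\mathcal{OC}$ and the comparison between the Floer-theoretic and Morse-theoretic models for the unit; here I would invoke the compatibilities of open-closed maps already established in the literature rather than reprove them. Combining the three steps yields $\mathcal A\circ i=\mathcal{OC}\circ\mathcal T\circ\Theta$ and hence the commutativity of the diagram.
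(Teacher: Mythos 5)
Your proposal is correct and follows essentially the same route as the paper: reduce to a generator $L$, identify the bottom path with $\mathcal{OC}_L(e_L)$, identify the top path with the image of the (dual of the) fundamental class of $L$, and then invoke the known compatibility of the open-closed map with the classical map $H^*(L)\to H^{*+n}(X,\mathfrak f)\to SH^{*+n}(X,\mathfrak f)$ — which is exactly Proposition 5.13 of \cite{GPS1}, the single ingredient the paper cites. The only caveat is that your appeal to Poincar\'e duality ``on the closed manifold $L$'' should be the relative (Lefschetz) version, since the Lagrangians here are conical and generally non-compact.
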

Let us discuss the maps in Corollary \ref{cor2}. Denote by $\mathcal W(X,\mathfrak f)$ the partially wrapped Fukaya category of a stopped Liouville manifold $(X, \mathfrak f)$ as defined in \cite{GPS2}. The map $\Theta$, initially constructed in \cite{BC2} and transferred to the Liouville setting in \cite{BOS}, sends the cobordism class of a Lagrangian to its class in the Grothendieck group $K_0(\mathcal W(X,\mathfrak f))$ of the (derived) partially wrapped Fukaya category. It is well-defined as Lagrangian cobordisms produce iterated cone-decompositions in the partially wrapped Fukaya category, resulting in relations $K_0(\mathcal W(X,\mathfrak f))$. The map $\mathcal T$, known as Dennis trace or Chern character, is defined for any $A_\infty$-category from the Grothendieck group $K_0$ to Hochschild homology $HH_*$ in degree $*=0$. It sends the class of an object to its identity endomorphism, which represents a class in Hochschild homology.
The definition of symplectic cohomology $SH^*(X,\mathfrak f)$, the acceleration map $\mathcal A$ and the open-closed map $\mathcal OC$ were extended from Liouville manifolds to Liouville sectors in \cite{GPS1}.

Corollary \ref{cor2} generalizes a diagram that was previously introduced by Lazarev \cite{LAZ}. For a Weinstein manifold $X$ and a Weinstein hypersurface $\mathfrak f$, Lazarev constructs a well-defined map $\mathcal L\colon H_n(X,\partial_\infty X\setminus \mathfrak f)\to K_0(\mathcal W(X, \mathfrak f))$ without referring to the maps $i$ and only considers a related version of $\Theta$. The map $\mathcal L$ can be recovered by $\mathcal L=\Theta\circ i^{-1}$ as by Theorem \ref{thm1} the map $i$ is an isomorphism in Lazarev's setting. Lazarev's definition of $\mathcal L$ heavily depends on knowing the generators of $H_n(X,\partial_\infty X\setminus \mathfrak f)$ and $K_0(\mathcal W(X, \mathfrak f))$ in contrast to the definition of $i$ and $\Theta$ which can be defined in a more general setting.

\begin{rem} The map $\Theta$  is always surjective. As for injectivity, Lazarev argued in \cite{LAZ} that $H_n(X, \partial_\infty X) \to K_0(\mathcal W(X))$ is not injective when $X$ is a flexible Weinstein manifold, as the latter group vanishes. By Theorem \ref{thm1}, $\Omega(X)$ is isomorphic to $H_n(X, \partial_\infty X)$ and therefore the map $$\Omega(X) \overset{\Theta}\longrightarrow K_0(\mathcal W(X))$$
is also not injective. This appears to be the first known example where this phenomenon occurs, as far as the author is aware.
\end{rem}

\begin{rem}
The map $i$ already appears in \cite{BC3} when $X$ is closed, and more comparable to our setting, when $X$ a Liouville manifold, but in that paper only closed Lagrangian submanifolds and bounded cobordisms are allowed. Under such circumstances we have a map 
$$i\colon\Omega(X)\to H_n(X).$$
This corresponds to the case when the stop $\mathfrak f=\partial_\infty X$ is the entire boundary at infinity. In this setting the map $i$ is not always an isomorphism as demonstrated by the following example from \cite{HKK}:

Let $X$ be the cotangent bundle $T^*S^1$ and $\mathfrak f=\partial_\infty X$ be the boundary at infinity. Up to Hamiltonian isotopies, the only exact Lagrangian submanifold in $(X, \mathfrak f)$ is the zero-section. However, $X$ can be equipped with a non-trivial grading such that the zero-section is not part of the Fukaya category. Explicitly, the grading comes from a line field that is constant in the cotangent direction and rotates a line by $\pi$ when transported along $S^1$. Consequently, the Grothendieck group and the cobordism group vanish. On the other hand, the first homology group of $T^*S^1$ with coefficients twisted by the corresponding local system is non-trivial, it is $\mathbb Z_2$.
\end{rem}

\subsection*{Strategy of the proof of Theorem \ref{thm1}}
The map $i$ is well-defined as a Lagrangian cobordism $V\subset \mathbb C\times X$ can be regarded as a relative $(n+1)$-chain in $X$ when projected to $X$. Suppose now $X$ is Weinstein and the stop $\mathfrak f$ is a Weinstein hypersurface. Then not only the relative homology but also the Lagrangian cobordism group is generated by cocores (see \cite{HH,HHL}). Furthermore, \cite{LAZ} showed that the relations defining relative homology can be realized in well-chosen coordinates by a boundary connected sum of certain cocores which is also realized by a Lagrangian cobordism.

\subsection*{Acknowledgements} 
This note is a result of the author's visit to the University of Edinburgh. The author greatly thanks Jeff Hicks for helpful discussions, particularly bringing to his attention that one can look at a cobordism as a chain as already done by Biran and Cornea, at least in the closed setting. Many thanks also to my advisor, Paul Biran, for various remarks improving the exposition and readability of this note. The author was partially supported by the Swiss National Science Foundation (grant number  $200021\_204107$).

\section{Definitions}
In this note, we assume that all Lagrangian submanifolds are oriented, exact with compactly supported primitive, and disjoint from the stop $\mathfrak f$ at infinity. One might further want to restrict to Lagrangian submanifolds that can be equipped with a spin structure. If the symplectic manifold comes with a grading one might focus only on graded Lagrangian submanifolds. A Lagrangian equipped with the above additional structures is referred to as a \textbf{Lagrangian brane}. These additional structures are necessary for the diagram in Corollary \ref{cor2} (see \cite{LAZ}). When using gradings, homology needs to be twisted by the corresponding local system.

\subsection{Weinstein, Liouville and stops}
The following symplectic manifolds will appear in this note.
$$
\begin{array}{@{}c@{\;}c@{\;}c@{\;}c@{\;}c@{\;}c@{\;}c@{}}
\text{Weinstein manifolds}&\subset &\text{Weinstein pairs}&\cong &\text{Weinstein sectors} & \subset & \text{weakly Weinstein manifolds} \\
\rotatebox[origin=c]{-90}{$\subset$} &&\rotatebox[origin=c]{-90}{$\subset$}&&\rotatebox[origin=c]{-90}{$\subset$}&&\rotatebox[origin=c]{-90}{$\subset$}\\
\text{Liouville manifolds}&\subset &\text{Liouville pairs}&\cong &\text{Liouville sectors} &\subset &\text{stopped Liouville manifolds}
\end{array}
$$

References providing extended discussion are \cite{ELI2, CHA2} for Weinstein manifolds, pairs and sectors, and \cite{GPS2} for the other notions. We summarize below the definitions for the reader's convenience.

\vspace{5mm}

A \textbf{Liouville domain} is a symplectic manifold $X_0$ manifold of dimension $2n$ with boundary $\partial X_0$ equipped with an exact symplectic form $\omega=d\lambda$ where the $\omega$-dual vector field $Z$ associated to the one-form $\lambda$ is outward pointing along $\partial X_0$. A \textbf{Liouville manifold} $X$ is the completion of a Liouville domain $X_0$ by attaching the positive half of the symplectisation of the contact manifold $\partial X_0$ to $X_0$. 

A \textbf{stop} is a closed subset $\mathfrak f\subset \partial X_0$, and  $(X_0,\mathfrak f)$ is called \textbf{stopped Liouville domain}. Also completing the stop by the Liouville flow in positive time produces a \textbf{stopped Liouville manifold} $(X, \mathfrak f)$. The data of a stopped Liouville manifold does not come with a choice of a stopped Liouville domain. It only assumes its existence, namely the stop $\mathfrak f$ is assumed to live in the ideal boundary, the Liouville boundary at infinity $\partial_\infty X$.

A \textbf{Liouville pair} $(X,\mathfrak f)$ is a Liouville manifold $X$ and a stop $\mathfrak f$, where $\mathfrak f\subset \partial_\infty X$ is a $(2n-2)$-dimensonal Liouville domain. The result of what we get after removing an open neighbourhood of $\mathfrak f$ is called a \textbf{Liouville sector}, which can be viewed as a specific Liouville manifold with boundary $\mathfrak f\times \mathbb R$).

The \textbf{relative core}  $\mathrm{core}(X, \mathfrak f)$ of a stopped Liouville manifold $(X, \mathfrak f)$ is the set of points in $X$ that, under the Liouville flow associated to the vector field $Z$, neither escape any Liouville domain nor end up in the stop $\mathfrak f$ eventually. If $\mathrm{core}(X, \mathfrak f)$ is \textbf{mostly Lagrangian}, meaning that after deleting a closed subset contained in the image of an at most $(n-1)$-dimensional manifold, it becomes Lagrangian in $X$, and any connected component of the Lagrangian part of the relative core admits a transverse Lagrangian intersecting that component exactly once (these Lagrangian submanifolds are called \textbf{generalized cocores}), then $(X,\mathfrak f)$ is called \textbf{weakly Weinstein}.

A \textbf{Weinstein manifold} $X$ is a Liouville manifold that admits a Morse function for which the Liouville vector field $Z$ is gradient-like. Weinstein manifolds can be reconstructed iteratively using copies of the Liouville manifolds $H^k=T^*B^{k}\times B^{2n-k}$, where $k=0,\ldots, n$. A \textbf{Weinstein pair} $(X,\mathfrak f)$ is a Weinstein manifold $X$ and a stop $\mathfrak f$ which is a $(2n-2)$-dimensional Weinstein domain. The relative core of a Weinstein pair is mostly Lagrangian with cocores of the smooth components being cotangent fibers of critical dimension handles $H^n$. Removing an open neighbourhood of $\mathfrak f$, as in the Liouville case, produces a \textbf{Weinstein sector}.

\subsection{Lagrangian cobordisms}
We use the definition of a Lagrangian cobordisms in stopped Liouville manifolds given in \cite{BOS}.

Let $(X,\mathfrak f)$ be a stopped Liouville manifold. Denote by $(\mathbb C,\mathfrak f_m)$ the completion of the disk with stop $\mathfrak f_m$ consisting of $(m+1)$ points on the boundary.

A \textbf{Lagrangian cobordism} is a Lagrangian submanifold $V$ in the product of stopped Liouville manifolds $(\mathbb C, \mathfrak f_m)\times (X, \mathfrak f)$ satisfying the following conditions:
\begin{itemize}
    \item $V$ is invariant under the Liouville flow outside a compact set $K\times X_0\subset \mathbb C\times X$,
    \item $V$ is a conicalization\footnote{For this note, it is enough to think of an honest product as one can conicalize and deconicalize the products that appear as ends, see \cite{GPS2}.} $\gamma_j\tilde \times L_j$ of products $\gamma_j\times L_j$ on $(\mathbb C\setminus K)\times X_0$, where $\gamma_j$ are radial rays at infinity in $\mathbb C$ passing through the arc between the stop $j$ and the stop $j+1$ on $\partial K$ and $L_j$ are Lagrangian submanifolds in $(X, \mathfrak  f)$ for $j=0,\dots, m$. The Lagrangian submanifolds $\gamma_j\tilde \times L_j$ are referred to as the \textbf{ends of the cobordism}.
\end{itemize}
\begin{center}
\begin{figure}
    \begin{tikzpicture}[scale=0.7]
\draw[fill=green, draw=green]    (-3.5,0)-- (-2,0) .. controls (0,0) and (0,0) .. (-1,2)--(-1.7,3.3)--(-1,2).. controls (0,0) and (0,0.8) .. (1.5,1)--(4,2)--(1.5,1) .. controls (1,0.7) and (1,-0.4) ..(2,-0.4)--(4,-0.7)--(2,-0.4).. controls (0,-2) and (0,0) ..(-1,0);

\node[green] at (-3.5,0.5) {$\gamma_2\tilde \times L_2$};
\node[green] at (3.8,1.3) {$\gamma_0\tilde  \times L_0$};
\node[green] at (3.8,-1.3) {$\gamma_3\tilde \times L_3$};
\node[green] at (-0.5,3.5) {$\gamma_1\tilde \times L_1$};
\node at (-2.4,-2.3) {$(\partial K, \mathfrak f_3)$};

\draw[dotted] (0,0) ellipse (2.5 and 2.5);
\node[shape = circle,fill = black, inner sep=2*0.7pt] at (90:2.5) {};
\node[shape = circle,fill = black, inner sep=2*0.7pt] at (160:2.5) {};
\node[shape = circle,fill = black, inner sep=2*0.7pt] at (5:2.5) {};
\node[shape = circle,fill = black, inner sep=2*0.7pt] at (-90:2.5) {};

\end{tikzpicture}
    \caption{A Lagrangian cobordism $V$ can be visualized by its projection to $\mathbb C$. As $V$ is conical outside a compact set no information is lost when only considering $V\cap \left (K\times X_0\right)$. In green, the projection of $V\cap \left (\mathbb C\times X_0\right)$ to $\mathbb C$ is drawn. The Lagrangian cobordism $V$ has 4 ends $L_0,\ldots, L_3$ parametrized by the rays $\gamma_0, \ldots, \gamma_3$ in $\mathbb C$ in this example. The dotted circle $\partial K$ is the boundary at infinity of $\mathbb C$ stopped at 4 points $\mathfrak f_3\subset \partial K$.}
\label{fig}
\end{figure}
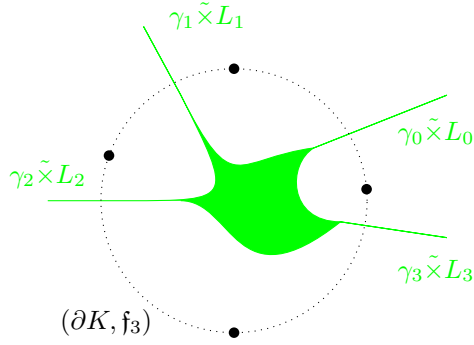
\end{center}

The \textbf{Lagrangian cobordism group} $\Omega (X, \mathfrak f)$ is the free abelian group generated by all Lagrangian submanifolds in $(X, \mathfrak f)$ modulo the relations $L_{0}+\cdots +L_m=0$ whenever there is a Lagrangian cobordism with ends $L_0\ldots, L_{m}$.

\section{Proofs}
\begin{proof}[Proof of Theorem \ref{thm1}] After deconicalizing the ends of the Lagrangian cobordism $V$ the submanifold $\widetilde V=V\cap (K\times X_0)$ has boundary $$\partial \widetilde V =\left(\bigcup_{i=0}^n (\gamma_j\cap \partial K)\times L_j\right) \cup \left(V\cap (K\times \partial X_0)\right)\subset \partial K\times X_0 \cup K\times \partial X_0.$$ Let $i_{\widetilde V}:\widetilde V\to K\times X_0$ be the inclusion map. By triangulating the submanifold $\widetilde V$ such that it restricts to a triangulation of its boundary $\partial \widetilde V$, $\widetilde V$ gives rise to a singular $(n+1)$-chain $\sigma \in S_{n+1}(K\times X_0$). Here and in what follows we denote by $S_k(-)$ the group of singular chains. Let $\pi_X:\mathbb C\times X \to X$ be the projection. Then $\pi_X\circ \sigma\in S_{n+1}(X_0)$ is a singular $(n+1)$-chain whose boundary is a triangulation of $\bigcup_{i=0}^n \{p_j\}\times L_j$ and the (possibly not embedded) set $\pi_X(K\times \partial X_0)\subset (\partial X_0\setminus \mathfrak f)$. Considering $\pi_X\circ \sigma$ as a relative chain, its boundary is equal to $\sum_{i=0}^n L_j + S_{n}(\partial X_0\setminus \mathfrak f)\in S_n(X_0,\partial X_0\setminus \mathfrak f)$. This shows that $\left[\sum_{i=0}^n L_j\right ] = 0$ in $H_n(X_0,\partial X_0\setminus \mathfrak f)$ whenever there is a Lagrangian cobordism with ends $L_0,\ldots, L_m$. 

Suppose now $(X, \mathfrak f)$ is weakly Weinstein. Then \cite{HH,HHL} prove that $\Omega(X,\mathfrak f)$ is generated by generalized cocores. Their argument is the following: let $L$ be a Lagrangian submanifold in $(X,\mathfrak f)$. We can assume that $L$ intersects the relative core $\mathrm{core}(X,\mathfrak f)$ transversely in the smooth part as this can be achieved by a compactly supported Hamiltonian diffeomorphism, which does not change the class of the Lagrangian in the Lagrangian cobordism group. Next, applying the Liouville flow to $L$, produces a Lagrangian suspension cobordism which can be cut off outside a compact set of $X$ and yields a Lagrangian cobordism with left end $L$ and right ends consisting of Lagrangian submanifolds that are Hamiltonian isotopic to generalized cocores of $L\cap \mathrm{core}(X,f)$.

Let us further restrict our setting to a Weinstein pair $(X, \mathfrak f)$. According to \cite{ELI2, CHA2} there is a Morse function $h$ compatible with both the Liouville structure on $X$ and $\mathfrak f$ whose cellular decomposition is a decomposition into standard handles $H^k$. Let us describe the homology $H_n(X, \partial_\infty X\setminus \mathfrak f)$ in terms of relative Morse homology. By Poincar\'e duality this is equivalent to describe relative Morse cohomology $H^n(X, \mathfrak f)$. Since $h$ has no critical points of index $n+1$, $H^n(X, \mathfrak f)$ is the free abelian group generated by critical points of $h$ of index $n$ modulo relations coming from critical points of index $n-1$. That is, for any critical point $x$ of index $(n-1)$ consider all positive gradient flow lines of $h$ that start at $x$ and end in a critical point $y$ of index $n$. Then the sum of all such critical points (counted with orientation and multiplicity) vanishes. Cocores are the unstable manifolds of the critical points of index $n$ and are Lagrangian submanifolds. This readily shows that the map $i$ is surjective. 

To prove injectivity of the map $i$, we are left to show that the relations in homology described above are realized by Lagrangian cobordisms. This is done in \cite{LAZ}, and we only outline the main argument for the reader’s convenience. In the following, we describe the relations to define Morse homology in degree $n$ that were outlined above but now in terms of handles. Let $H_x = T^*B^{n-1}\times  B^2$ be the $(n-1)$-handle associated to a critical point $x$ of index $n-1$ and $H_y = T^*B^n$ be the $n$-handle associated to a critical point $y$ of index $n$. The gradient flow line between $x$ and $y$ is the unstable manifold of $x$ intersected with the stable manifold of $y$. Lazarev \cite{LAZ} shows that there is a coordinate transformation (see Figure \ref{fig2}), such that all positive flow lines in $H_x = T^*B^{n-1}\times B^2$ leaving $x$ and converging to critical points $y_0,\ldots, y_m$ of index $n$ are constant in the first coordinate and radial in $B^2$. Denote by $z_0,\ldots, z_m$ the intersection of the flow lines with $\partial B^2$. Treating  $z_j$ as a connected component of a stop in $\partial B^2$ it admits a linking disc $D_j\subset B^2$. The Lagrangian disks $B^{n-1}\times D_j \subset T^*B^{n-1}\times \partial B^2$ are Lagrangian isotopic to the cocores associated to the critical points $y_j$, respectively. So the relation between the cocores in $H^n(X, \mathfrak f)$ coming from $H_x$ is that the sum of the cocores associated to $y_0,\ldots, y_m$ vanishes. This relation can also be realized as the boundary connected sum of the linking disks $D_0,\ldots, D_m$, which itself is realized as an iterated surgery Lagrangian cobordism as described in \cite{BOS} and hence is a relation that defines $\Omega(X,\mathfrak f)$.
\end{proof}

\begin{center}
\begin{figure}
    \begin{tikzpicture}[scale=0.7]

\draw (0,0) -- (0:2.5);
\draw (0,0) -- (50:2.5);
\draw (0,0) -- (120:2.5);
\draw (0,0) -- (200:2.5);
\draw (0,0) -- (270:2.5);

\draw[blue]    (20:2.5) .. controls (0:2) and (0:2) .. (-20:2.5);
\draw[blue]    (70:2.5) .. controls (50:2) and (50:2) .. (30:2.5);
\draw[blue]    (140:2.5) .. controls (120:2) and (120:2) .. (100:2.5);
\draw[blue]    (220:2.5) .. controls (200:2) and (200:2) .. (180:2.5);
\draw[blue]    (290:2.5) .. controls (270:2) and (270:2) .. (250:2.5);

\node[blue] at (-20:3) {$D_0$};
\node[blue] at (30:3) {$D_1$};
\node[blue] at (100:3) {$D_2$};
\node[blue] at (180:3) {$D_3$};
\node[blue] at (250:3) {$D_4$};

\node at (0:3) {$z_0$};
\node at (50:3) {$z_1$};
\node at (120:3) {$z_2$};
\node at (200:3) {$z_3$};
\node at (270:3) {$z_4$};

\node[red] at (320:0.5) {$x$};
\node[shape = circle,fill = red, inner sep=2*0.7pt] at (0:0) {};

\draw[dotted] (0,0) ellipse (2.5 and 2.5);
\node[shape = circle,fill = black, inner sep=2*0.7pt] at (0:2.5) {};
\node[shape = circle,fill = black, inner sep=2*0.7pt] at (50:2.5) {};
\node[shape = circle,fill = black, inner sep=2*0.7pt] at (120:2.5) {};
\node[shape = circle,fill = black, inner sep=2*0.7pt] at (200:2.5) {};
\node[shape = circle,fill = black, inner sep=2*0.7pt] at (270:2.5) {};
\end{tikzpicture}
    \caption{An $(n-1)$-handle $T^*B^{n-1}\times B^2$ associated to a critical point $x$ projected to $B^2$. In well-chosen coordinates, positive gradient flow lines from the critical point $x$ of index $(n-1)$ to critical points of index $n$ are constant in the first coordinate and point radially outward. Each such flow line gives rise to a linking disk $D_j$ orthogonal to the flow line.}
\label{fig2}
\end{figure}
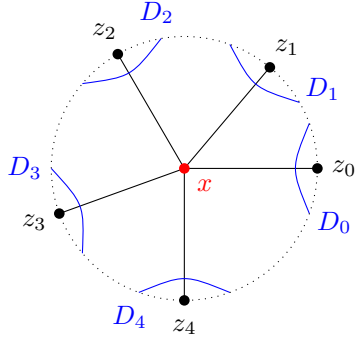
\end{center}

\begin{proof}[Proof of Corollary \ref{cor1}]
    By Poincar\'e duality, we have $H_n(X, \partial_\infty X)\cong H^n(X)$ and $H_n(Y, \partial_\infty Y)\cong H^n(Y)$. Viterbo restriction for cobordism groups is then given by postcomposing $i$ with restriction in cohomology $H^n(Y)\to H^n(X)$ induced by the inclusion $X\to Y$, and eventually using again $i$ for the Weinstein manifold $Y$.
\end{proof}

\begin{proof}[Proof of Corollary \ref{cor2}]
The statement follows from Proposition 5.13 in \cite{GPS1}, which states that the following diagram commutes for each Lagrangian $L$ in $(X,\mathfrak f)$:
  \begin{center}
  \begin{tikzcd}
        H^*(L)\arrow[r]\arrow[rd]& H^{*+n}(X,\mathfrak f) \arrow[r, "\mathcal A"] &SH^{*+n}(X, \mathfrak f).\\
        & HW^*(L,L)\arrow[ur, "\mathcal{OC}"']
    \end{tikzcd} 
  \end{center}
  Commutativity of the diagram in Corollary \ref{cor2} is then true by the following.
  The composition $\mathcal T\circ \Theta:\Omega(X,\mathfrak f)\to HH_0(\mathcal W(X, \mathfrak f)$ takes the class of $L$ in $\Omega(X,\mathfrak f)$ to the unit endomorphism $e_L\in CW^0(L,L)$. By the diagram above, applying the open-closed map $\mathcal{OC}$ to $e_L$ is the same as applying the acceleration map $\mathcal A$ to the image of the fundamental class $[L]$ in $H^0(L)\to H^{*+n}(X,\mathfrak f)\cong H_n(X,\partial_\infty X\setminus \mathfrak f).$ This is the same as applying the first row in the diagram in Corollary \ref{cor2} to the class of $L$ in $\Omega(X,\mathfrak f)$.
  
\end{proof}

\bibliography{bibi}

\providecommand{\bysame}{\leavevmode\hbox to3em{\hrulefill}\thinspace}
\providecommand{\noopsort}[1]{}
\providecommand{\mr}[1]{\href{http://www.ams.org/mathscinet-getitem?mr=#1}{MR~#1}}
\providecommand{\zbl}[1]{\href{http://www.zentralblatt-math.org/zmath/en/search/?q=an:#1}{Zbl~#1}}
\providecommand{\jfm}[1]{\href{http://www.emis.de/cgi-bin/JFM-item?#1}{JFM~#1}}
\providecommand{\arxiv}[1]{\href{http://www.arxiv.org/abs/#1}{arXiv~#1}}
\providecommand{\MR}{\relax\ifhmode\unskip\space\fi MR }
\providecommand{\MRhref}[2]{%
  \href{http://www.ams.org/mathscinet-getitem?mr=#1}{#2}
}
\providecommand{\href}[2]{#2}
\begin{thebibliography}{CGGR22}

\bibitem[BC14]{BC2}
\bgroup\scshape{}P.~Biran\egroup{} and \bgroup\scshape{}O.~Cornea\egroup{},
  {L}agrangian cobordism and {F}ukaya categories,  \emph{Geometric and
  functional analysis} \textbf{24} no.~6 (2014), 1731--1830.

\bibitem[BC17]{BC3}
\bgroup\scshape{}P.~Biran\egroup{} and \bgroup\scshape{}O.~Cornea\egroup{},
  Cone-decompositions of {L}agrangian cobordisms in {L}efschetz fibrations,
  \emph{Selecta Mathematica} \textbf{23} no.~4 (2017), 2635--2704.

\bibitem[Bos22]{BOS}
\bgroup\scshape{}V.~Bosshard\egroup{}, Lagrangian cobordisms in {L}iouville
  manifolds,  \emph{Journal of Topology and Analysis} (2022), 1--55.

\bibitem[CGGR22]{CHA2}
\bgroup\scshape{}B.~Chantraine\egroup{}, \bgroup\scshape{}P.~Ghiggini\egroup{},
  \bgroup\scshape{}R.~Golovko\egroup{}, and \bgroup\scshape{}G.~D.
  Rizell\egroup{}, Geometric generation of the wrapped {F}ukaya category of
  {W}einstein manifolds and sectors, 2022. \arxiv{1712.09126v4}.

\bibitem[Eli18]{ELI2}
\bgroup\scshape{}Y.~Eliashberg\egroup{}, Weinstein manifolds revisited,  in
  \emph{Modern geometry: a celebration of the work of {S}imon {D}onaldson},
  \emph{Proc. Sympos. Pure Math.} \textbf{99}, Amer. Math. Soc., Providence,
  RI, 2018, pp.~59--82.

\bibitem[GPS20]{GPS1}
\bgroup\scshape{}S.~Ganatra\egroup{}, \bgroup\scshape{}J.~Pardon\egroup{}, and
  \bgroup\scshape{}V.~Shende\egroup{}, Covariantly functorial wrapped {F}loer
  theory on {L}iouville sectors,  \emph{Publications Math\'{e}matiques.
  Institut de Hautes \'{E}tudes Scientifiques} \textbf{131} (2020), 73--200.

\bibitem[GPS22]{GPS2}
\bgroup\scshape{}S.~Ganatra\egroup{}, \bgroup\scshape{}J.~Pardon\egroup{}, and
  \bgroup\scshape{}V.~Shende\egroup{}, Sectorial descent for wrapped {F}ukaya
  categories, 2022. \arxiv{1809.03427v3}.

\bibitem[HKK17]{HKK}
\bgroup\scshape{}F.~Haiden\egroup{}, \bgroup\scshape{}L.~Katzarkov\egroup{},
  and \bgroup\scshape{}M.~Kontsevich\egroup{}, Flat surfaces and stability
  structures,  \emph{Publications math{\'e}matiques de l'IH{\'E}S} \textbf{126}
  no.~1 (2017), 247--318.

\bibitem[HH22]{HH}
\bgroup\scshape{}A.~Hanlon\egroup{} and \bgroup\scshape{}J.~Hicks\egroup{},
  Aspects of functoriality in homological mirror symmetry for toric varieties,
  \emph{Advances in Mathematics} \textbf{401} (2022), 108317.

\bibitem[HHL]{HHL}
\bgroup\scshape{}A.~Hanlon\egroup{}, \bgroup\scshape{}J.~Hicks\egroup{}, and
  \bgroup\scshape{}O.~Lazarev\egroup{}, On categorical dimensions in topology
  and symplectic geometry,  \emph{In preparation}.

\bibitem[Laz22]{LAZ}
\bgroup\scshape{}O.~Lazarev\egroup{}, Symplectic flexibility and the
  {G}rothendieck group of the {F}ukaya category,  \emph{Journal of Topology}
  \textbf{15} no.~1 (2022), 204--237.

\end{thebibliography}
\bibliographystyle{aomalpha}
\end{document}